\newcounter{mnotecounter}
\newtheorem{theorem}{Theorem}[section]
\newtheorem{lemma}[theorem]{Lemma}
\newtheorem{definition}[theorem]{Definition}
\newtheorem{proposition}[theorem]{Proposition}
\newtheorem{example}[theorem]{Example}
\newtheorem{remark}[theorem]{Remark}
\newtheorem{corollary}[theorem]{Corollary}
\def\qed{\hfill\vbox{\hrule\hbox{\vrule\kern3pt\vbox{\kern6pt}\kern3pt\vrule}\hrule}\bigskip}
\def\P{{\mathbb P}}
\newcommand{\F}{\mathbb{F}}
\newcommand{\C}{\mathbb{C}}
\newcommand{\N}{\mathbb{N}}
\newcommand{\mV}{\mathcal{V}}
\newcommand{\mP}{\mathcal{P}}
\newcommand{\mC}{\mathcal{C}}
\newcommand{\mA}{\mathcal{A}}
\newcommand{\odelta}{\,\overline{\delta}\,}
\newcommand{\ox}{\,\overline{x}\,}
\newcommand{\ODelta}{\,\overline{\Delta}\,}
\newcommand{\oE}{\,\overline{E}\,}
\newcommand{\oDP}{\overline{\DP}\,}
\newcommand{\AP}{\mA_\mP}
\newcommand{\DP}{\mathcal{D}_\mP}
\DeclareMathOperator{\res}{Resultant}
\title{Sufficient conditions for the surjectivity of radical curve parametrizations}
\author{Jorge Caravantes$^1$,  J. Rafael Sendra$^2$,  David Sevilla$^3$ and Carlos Villarino$^4$ \\
$^1$ $^2$ $^4$ Universidad de Alcal\'a\\ Dpto. de F\'isica y Matem\'aticas \\ 28871 Alcal\'a de Henares (Madrid), Spain \\ 
$^3$  Centro U. de M\'erida\\ Universidad de Extremadura \\ Av. Santa Teresa de Jornet 38, 06800 M\'erida (Badajoz), Spain\\ 
email: {\tt$^1$jorge.caravantes@uah.es, $^2$rafael.sendra@uah.es,}\\ {\tt $^3$sevillad@unex.es, $^4$carlos.villarino@uah.es}
}
\begin{document}

{\color{red} The journal version of this paper appears in

\noindent
\textit{
Jorge Caravantes, J. Rafael Sendra, David Sevilla, Carlos Villarino,
Sufficient conditions for the surjectivity of radical curve parametrizations,
Journal of Algebra,
Volume 640,
2024,
Pages 129-146,
ISSN 0021-8693,
\url{https://doi.org/10.1016/j.jalgebra.2023.11.004}.
}

\vskip 20pt

\noindent
licensed under a Creative Commons Attribution 4.0 International License, \url{https://creativecommons.org/licenses/by/4.0/}}

%
%
%
%

\maketitle

\begin{abstract}
In this paper,  we introduce the notion of surjective radical parametrization and we prove sufficient conditions for a radical curve parametrization to be surjective. 
\end{abstract}

\noindent \textbf{Keywords.} Algebraic curve, radical parametrization, surjective parametrization, missing points.

\section{Introduction}
Let $\mV$ be a curve or an algebraic surface over $\C$ for which a parametrization $\mP$ is available. This parametrization can be visualized as a map from a subset of the parameter space on $\mV$. It therefore makes sense to talk about the surjectivity of $\mP$.

If the parametrization is not surjective, when working with it instead of the implicit equations of $\mV$, information about the part of $\mV$ not covered by $\mP$ is lost. This fact may have undesirable consequences in some applications since the information sought in $\mV$ may lie out of the image of $\mP$; in the introduction of \cite{SendraSevillaVillarino2017Ruled} this phenomenon is illustrated with some examples. Thus, the need to study the surjectivity of parametrizations arises. 

If $\mV$ is a curve and $\mP$ is a rational parametrization, the problem is essentially solved in \cite{Sendra} (see \cite{AR} and \cite{bajaj} for different approaches) where, in addition to giving sufficient conditions to guarantee surjectivity, it is shown that, at most, only one point can be missed via $\mP$, and it is shown how to detect it. In addition, it is also proved that every rational curve can be surjectively parametrized.

When $\mV$ is a surface, even if $\mP$ is a rational parametrization, the problem is considerably more complicated. In fact, in \cite{CSSV} it is shown that there are rational surfaces that do not admit surjective rational parametrizations. One way to overcome this difficulty is to look for finite families of parametrizations such that the union of their images covers $\mV$ (see \cite{Gao}, \cite{SendraSevillaVillarino2015}, \cite{SendraSevillaVillarino2017Ruled}). On the other hand, in \cite{SendraSevillaVillarino2015RICAM} particular families of surfaces are presented that do admit surjective rational parametrizations.

In this paper, we apparently reduce the complexity of the problem by working with curves instead of surfaces but increase the difficulty by allowing the parametrization to be radical; a radical parametrization is, intuitively, a tuple of rational functions whose numerators and denominators are nested expressions of radicals of polynomials with Jacobian rank equal to one, see \cite{SendraSevillaVillarino2017} for more details. Therefore, this work should be considered as a first step towards the general study of surjectivity in the radical case, leaving open problems for future work such as obtaining, if feasible, surjective radical parametrizations. This article focuses mainly on obtaining sufficient conditions to guarantee surjectivity. 

The first obstacle we face is to adequately define what we mean by surjectivity in the radical case. Let us see the problem in a trivial example. The rational parametrization $\left(\frac{2t}{t^2+1},\frac{t^2-1}{t^2+1}\right)$ of the unit circle covers all the curve except the north pole point $(0,1)$. However, if we consider the radical parametrization $(t,\sqrt{1-t^2})$ of the same curve, taking the square root as a function (i.e. just choosing one of the two possibilities for every radicand), we can only aspire to cover one half of the curve. Our approach is to refer to the union of the images of the given parametrization and all its conjugates. Thus, in the example above, we would be simultaneously talking about $(t,\pm \sqrt{1-t^2})$ and, now, the union of the images is the entire circle; see Example \ref{ex-circle} and Remark \ref{rem-fermat}.

Since, in the case of rational curve parametrizations, the only point of the curve that can be lost is the image of infinity, a sufficient condition for surjectivity is that at least one of the rational functions of the parametrization has a numerator with a degree strictly bigger than the degree of the denominator. With the intention of imitating this result in the radical case, we introduce weights to adequately define the degree of polynomials with nested radicals, see Def. \ref{def:grado}. However, this does not solve the problem completely (see Example \ref{ej:eje}). The reason is that indeterminations of type 0/0 can appear. To overcome this difficulty we introduce the notion of guilty polynomial (see Def. \ref{def:guilty}), and the weaker notion of suspicious polynomial (see Def. \ref{def-sup}), to finally give a sufficient condition of radical surjectivity, see Theorem \ref{th:main} and Corollary \ref{cor-susp}.

The paper is structured as follows: in Section \ref{sec:previos} we recall some notions and results of radical parametrizations, we introduce the notion of surjectivity for a radical parametrization, and we outline our strategy. In Section \ref{sec-suff} we introduce the notion of guiltiness and we prove the main Theorem \ref{th:main}. Some consequences are derived there. In Section \ref{sec-hypo} we discuss the hypotheses of the main theorem and present an alternative condition to guiltiness, namely being suspicious, that is more convenient from a computational point of view but in principle more restrictive (see Theorem \ref{thm-sus-guilty}). In Section \ref{sec-miss} we conclude the article with bounds on the number of missing points in the case when some of our conditions for surjectivity do not hold.

\section{Notation and preliminaries}\label{sec:previos}

A radical parametrization is a {tuple $(x_1,\ldots,x_n)$} of elements of the last field of a radical tower $\F_0\subseteq\cdots\subseteq\F_m$ where $\F_0=\C(t)$ and $\F_i=\F_{i-1}(\delta_i)$ such that $\delta_i^{e_i}\in\F_{i-1}$ (see more details in \cite{SendraSevillaVillarino2017}). We will work with the polynomial ring $\C[t,\Delta_1,\ldots,\Delta_m]$ where weights on the variables will be introduced later. In this ring lie the defining polynomials of the $\delta_i$, namely $E_i:=\Delta_i^{e_i}-g_i(t,\Delta_1,\ldots,\Delta_{i-1})$ where $\deg_{\Delta_j}(g_i)<e_j$ for all $j=1,...,i-1$. We will denote the $\Delta_i$ collectively as $\ODelta$, and similarly for other indexed names.

\begin{remark}\label{rem:estructura}
Any rational function in $\C(t,\odelta)$ can be written as a polynomial in $\C(t)[\odelta]$.
\end{remark}

Recall from \cite{SendraSevillaVillarino2017} that a radical parametrization of a curve, among other stuff, incorporates a pair $(\mP, \oE)$, where
\begin{equation}\label{eq:P}
{
\mP(t,\ODelta)=\left(\displaystyle\frac{p_1(t,\ODelta)}{q_1(t,\ODelta)},\ldots,\frac{p_n(t,\ODelta)}{q_n(t,\ODelta)}\right)}
\end{equation}
is an element of {$\C(t,\ODelta)^n$}, and $\oE=(E_1,\ldots,E_m)$. We will use throughout this paper the following definition of surjectivity:

\begin{definition}\label{def:sup}
We say that the parametrization is \textbf{surjective} or \textbf{normal} when the set
\[X(\mP):=\left\{{\overline{x}\in\C^n} \left|\ \exists\, (t,\ODelta)\in\C^{m+1}\mbox{ \rm s.t. }
\begin{array}{l}
E_i(t,\ODelta)=0,\ \forall\, i=1,\ldots,m; \cr
\mP(t,\ODelta)\mbox{ \rm is well defined; and}\cr
\mP(t,\ODelta)={\overline{x}}
\end{array}
\right.\right\}\]
is Zariski closed in $\C^n$. 
\end{definition}

Let us call $\DP$ the algebraic subset of $\C^{m+n+2}$ of points $(t,\ODelta,\overline{x},z)$ satisfying the equations $E_i(t,\ODelta)=0$, $i=1,\ldots,m$; 
{$p_i(t,\ODelta)-x_i \,q_i(t,\ODelta)=0$; $i=1,\ldots,n$; and $z\cdot\mathrm{lcm}(q_1(t,\ODelta),\ldots,q_n(t,\ODelta))-1=0$.} 
We have then the projection
\begin{equation}\label{eq-DP}
\begin{array}{ccc}
\DP
&{\longrightarrow}&\overline{X(\mP)} {\subset \C^n}\\
{(t,\ODelta,\overline{x},z)}
&\mapsto&{\overline{x}}
\end{array}
\end{equation}
where $\overline{X(\mP)}$ denotes the Zariski closure of $X(\mP)$. Note that every constructible set involved is unidimensional (see \cite{SendraSevillaVillarino2017}). Then, the set of missing points $\overline{X(\mP)}\setminus{X(\mP)}$ is finite. Observe that the radical variety defined in \cite{SendraSevillaVillarino2017}, in our case, radical curve, is an irreducible component of $\overline{X(\mP)}$, so, when irreducible, $\overline{X(\mP)}$ is the whole radical variety.

\begin{remark}
The definition of surjective in Def. \ref{def:sup} corresponds to the usual notion of surjective map in the following way. For each combination of possible branches in \eqref{eq:P}, we can introduce a map from a subset of $\C$ to $\C^n$. Surjectivity in Def. \ref{def:sup} requires that $\overline{X(\mP)}=X(\mP)$ (see \eqref{eq-DP}), and hence that the union of the images of all these maps is $\overline{X(\mP)}$. 
\end{remark}

\begin{example}\label{ex-circle}
Let $\mP(t,\Delta)=(t, \Delta)$ where $E:=\Delta^2-(1-t^2)$. Then, $\DP$ is 
\[ \DP=\left\{ (t,\Delta,x,y,z) \in \C^5 \,\left| \, \begin{array}{r} x-t=0 \\ y-\Delta=0 \\ z=1 \\ \Delta^2-(1-t^2)=0 \end{array} \right. \right\}. \]
By the Closure Theorem (see \cite[Ch. 3.2, Th. 3 or Ch. 4.4, Th. 3]{CoxLittleOshea2015a}) we have that $\overline{X(\mP)}$ is the curve $\mathcal{C}:=\{(x,y)\in \C^2\,|\, x^2+y^2=1\}$. Let us now analyze $X(\mP)$, that is 

\[ X(\mP):=\left\{ (x,y) \in\C^2 \left|\ \exists\, (t,\Delta)\in\C^{2} \quad\mathrm{s.t. }
\begin{array}{l}
\Delta^2=1-t^2 \cr
(t,\Delta)=(x,y).
\end{array}
\right.\right\} \]

First, we observe that $X(\mP)\subset \mathcal{C}$ since $t^2+\Delta^2=1$. On the other hand, if $(a,b)\in \mathcal{C}$, taking $t=a$, and $\Delta$ as one of the roots of $\sqrt{1-a^2}$, we get that that $(a\,b)\in \DP$, and hence $X(\mP)=\overline{X(\mP)}$. So $X(\mP)$ is Zariski closed and hence, by Def. \ref{def:sup}, $\mP$ is surjective.

On the hand $\mP$ provides two map, namely, $(t,\pm \sqrt{1-t^2})$, and the union of their images is $\mathcal{C}$.
\end{example}

Recall the notation $V(I)$ for the algebraic subset of the affine space defined by an ideal $I$ and $I(X)$ for the ideal of polynomials vanishing at all the points of a subset $X$ of the affine space. Again by the Closure Theorem, we have that $\overline{X(\mP)}=V(I(\DP)\cap\C[{\overline{x}}])$. 
Observe that $X(\mP)$ is the image of $\DP$ by the projection $(t,\ODelta,{\overline{x}},z)\mapsto{\overline{x}}$. This means that $I(X(\mP))=I(\overline{X(\mP)})\supset I(\DP)\cap\C[{\overline{x}}]$. Therefore, surjectivity is equivalent to the image of $\DP$ by the projection being Zariski closed. Given this fact, our main tool will be the Extension Theorem:

\begin{theorem}\label{th:extension}[Extension Theorem, see \cite{CoxLittleOshea2015a} Ch. 3.1, Th. 3]
Let $I=\langle f_1,\ldots,f_s\rangle\subseteq\C[x_1,\ldots,x_n]$ and let $\widetilde{I}=I\cap\C[x_2,\ldots,x_n]$. For each $1\leq i\leq s$, write
\[
f_i = c_i(x_2,\ldots,x_n) x_1^{N_i} + \mbox{ terms in which $x_1$ has degree $<N_i$},
\]
where $N_i\geq0$ and $c_i\neq0$. Suppose that we have a partial solution $(a_2,\ldots,a_n)\in V(\widetilde{I})$. If there exists $c_i$ which is nonzero at $(a_2,\ldots,a_n)$ then there exists $a_1\in\C$ such that $(a_1,a_2,\ldots,a_n)\in V(I)$.
\end{theorem}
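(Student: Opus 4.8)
The statement is the classical Extension Theorem of elimination theory, so the plan is to reconstruct its proof from the theory of resultants together with the Closure Theorem already invoked above. First I would assemble the resultant toolkit. For $f,g\in\C[x_2,\ldots,x_n][x_1]$, viewed as univariate polynomials in $x_1$ of positive degree, I would record three facts about $\res(f,g,x_1)$. (i) \emph{Membership:} $\res(f,g,x_1)\in\C[x_2,\ldots,x_n]$ and there exist $A,B\in\C[x_1,\ldots,x_n]$ with $Af+Bg=\res(f,g,x_1)$, so in particular $\res(f,g,x_1)\in\langle f,g\rangle\cap\C[x_2,\ldots,x_n]$; this comes from row operations on the Sylvester matrix. (ii) \emph{Roots:} for univariate $p,q\in\C[x_1]$ with at least one of positive degree and nonvanishing leading coefficient, $\res(p,q,x_1)=0$ if and only if $p$ and $q$ have a common root in $\C$ (or $q\equiv0$). (iii) \emph{Specialization:} if $c$, the leading coefficient of $f$ in $x_1$, does not vanish at $(a_2,\ldots,a_n)$, then $\res(f,g,x_1)(a_2,\ldots,a_n)=c(a_2,\ldots,a_n)^{p}\,\res(f',g',x_1)$, where primes denote specialization at the partial solution and $p\ge0$ is the drop in the $x_1$-degree of $g$. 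Proving (i)--(iii) is routine work with the Sylvester determinant.

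Next I would settle the case $s=2$, $I=\langle f_1,f_2\rangle$, which is the heart of the matter. Let $(a_2,\ldots,a_n)\in V(\widetilde I)$ and suppose $c_1(a_2,\ldots,a_n)\ne0$; I note first that this forces $N_1\ge1$, since if $N_1=0$ then $f_1=c_1\in\widetilde I$ would vanish at the partial solution. If $f_2$ also has positive $x_1$-degree, then $\res(f_1,f_2,x_1)\in\widetilde I$ by (i), so it vanishes at $(a_2,\ldots,a_n)$; applying (iii) with $c_1(a)\ne0$ gives $\res(f_1',f_2',x_1)=0$, and then (ii) produces an $a_1$ with $f_1(a_1,a_2,\ldots,a_n)=f_2(a_1,a_2,\ldots,a_n)=0$. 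If instead $f_2$ has $x_1$-degree $0$, then $f_2\in\widetilde I$ vanishes at the partial solution, so $f_2'\equiv0$ and any root $a_1$ of the nonconstant $f_1'$ works. Either way the partial solution extends, so $(a_1,a_2,\ldots,a_n)\in V(I)$.

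The main obstacle is the passage from $s=2$ to general $s$, because a single resultant no longer captures the simultaneous vanishing of all generators. Here I would reduce to the two-generator situation: keep the ``good'' generator $f_1$ (with $c_1(a)\ne0$, hence $N_1\ge1$) and replace $f_2,\ldots,f_s$ by a generic combination $g=u_2f_2+\cdots+u_sf_s$ in auxiliary variables $u_2,\ldots,u_s$, forming $\res(f_1,g,x_1)$ as a polynomial in $x_2,\ldots,x_n,u_2,\ldots,u_s$. Expanding in the $u_i$, each coefficient lies in $\widetilde I$ (since $g$ is a combination of the $f_j$ and the $f_j$ are free of the $u_i$), hence vanishes at $(a_2,\ldots,a_n)$; so $\res(f_1',g',x_1)=0$ identically in the $u_i$ after specialization, which via (ii) makes $f_1'$ share a root with $g'$ for all $u_i$. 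A pigeonhole argument over the finitely many roots of $f_1'$ then isolates a single $a_1$ that is a root of $f_i'$ for every $i$ simultaneously, giving the extension. The delicate point is controlling the $x_1$-degree drops and the vanishing of the relevant leading coefficients uniformly in the $u_i$. A cleaner but less elementary alternative that treats all $s$ at once is to projectivize in the $x_1$-direction: homogenizing the $f_i$ puts $V(I)$ inside $\P^1\times\C^{n-1}$, the projection to $\C^{n-1}$ is closed, and so its image contains $\overline{\pi(V(I))}=V(\widetilde I)$ by the Closure Theorem; thus the fibre over $(a_2,\ldots,a_n)$ is nonempty, while the hypothesis $c_i(a)\ne0$ for some $i$ is exactly what excludes the single point at infinity from that fibre, forcing an affine point and hence the desired $a_1$. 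The price of this route is invoking the completeness of $\P^1$, i.e.\ the main theorem of elimination theory.
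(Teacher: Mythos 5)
The paper does not prove this statement: it is quoted verbatim as a known result with a citation to Cox--Little--O'Shea, so there is no in-paper proof to compare against. Judged on its own, your reconstruction is essentially the standard resultant-based proof (and in fact the one given in the cited reference): the toolkit (i)--(iii), the $s=2$ case, and the reduction of general $s$ to $s=2$ via the generic combination $g=u_2f_2+\cdots+u_sf_s$ followed by the finite-union-of-linear-subspaces argument over the roots of $f_1'$ are all correct in outline. The one place where you state the difficulty rather than resolve it is the specialization lemma in the presence of the auxiliary variables: after setting $x_j=a_j$, the $x_1$-leading coefficient of $g$ (a linear form in the $u_i$ whose coefficients are the top $c_j$'s) may vanish identically in $u$, so one must use the version of (iii) in which only the leading coefficient of $f_1$ is required to survive and the degree drop of $g$ is absorbed into the factor $c_1(a)^p$ --- this is exactly the ``delicate point'' you flag, and it does need to be written out for the argument to close. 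Your alternative route via homogenization in $x_1$ and the properness of $\P^1$ is also sound (and not circular here, since the inclusion $V(\widetilde I)\subseteq\overline{\pi(V(I))}$ follows from the Nullstellensatz without the Extension Theorem), at the acknowledged cost of invoking the main theorem of elimination theory.
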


Theorem \ref{th:extension} can be used to lift any partial solution ${\overline{x}}$ of $I(\DP)\cap\C[{\overline{x}}]$ to $\DP$, proving surjectivity.

In the case of rational curve parametrizations, 
to check whether the rational parametrization is surjective it suffices to check that for one of the components the degree of the numerator is larger than its denominator's. In order to replicate this result for radical parametrizations, we will provide $\C[t,\ODelta]$ with a structure of graded ring with rational weights for the variables.

\begin{definition}\label{def:grado}
First we define $\deg(t)=1$. Then we define recursively the \textbf{degree} of $\Delta_i$ as the degree of $g_i$ divided by $e_i$, with the classical definition of degree for a polynomial on several weighted variables; {recall that $\Delta_{i}^{e_i}=g_i$}.
\end{definition}

\begin{example}\label{ej:eje}
This example illustrates how the condition on the degrees that works in the rational case is not sufficient in the radical case. Consider the parametrization ${\mP(t)}=(0,t-\sqrt{t^2-1})$ of the vertical axis {$x=0$ in $\C^2$}. If we take $\Delta_1$ to be a square root of $g_1(t)=t^2-1$, the degree of the polynomial corresponding to the $y$ coordinate, $y(t,\Delta_1)=t\pm\Delta_1$, would be 1 in any case. Since the denominator of $y$ is constant, the desired condition on the degrees is true. However, the origin does not correspond to any value of $t\in\C$ with any of its two possible square roots.
\end{example}

We need to exclude some expressions in order to find easy to check sufficient conditions of surjectivity. In order to apply the Extension Theorem to polynomials that will arise from eliminating the $\Delta_i$ by multiplying conjugates, we need conditions to ensure that we can control the degrees of those products. 
Note that one can eliminate the $\Delta_i$ with the set $\{E_1,\ldots,E_m\}$, which is a triangular Gr\"obner basis with respect to the lexicographical ordering $\Delta_m>\cdots>\Delta_1>t$.

\begin{definition}\label{def-NF}
For every $f\in\C[t,\ODelta]$, we denote as $N(f)$ the \textbf{normal form} {of $f$} with respect to $\{E_1,\ldots,E_m\}$. Note that the normal form is just the polynomial obtained by repeatedly substituting every instance of $\Delta_i^{e_i}$ by $g_i$.

If $f\in \C[t]$, observe that $N(f)=f$.
\end{definition}

\begin{example}[continuation of Example \ref{ej:eje}]\label{ej:eje2}
The problem in Example \ref{ej:eje} is that $t=\infty$ corresponds to an affine point in the parametrization. There we have two leading monomials, $t$ and $\sqrt{t^2-1}$, that in a way cancel each other. In fact, to reach such cancellation with radicals, one usually multiplies by the conjugate. For ${f(t,\Delta_1)}=t-\Delta_1$ and its conjugate $f(t,-\Delta_1)=t+\Delta_1$, the normal form of their product (recall $E_1=\Delta_1^2-(t^2-1)$) is $N(t^2-\Delta_1^2) = \mathrm{rem}(t^2-\Delta_1^2,E_1) = 1$. {In terms of radicals, we can write} $(t-\sqrt{t^2-1})(t+\sqrt{t^2-1}) = 1$. Note that we lose degree when, after multiplying by all conjugates of $f$, we apply the equality $E_1=0$ to subtitute $\Delta_1^2$ in terms of $t$.
\end{example}

%
%

\section{Sufficient condition for surjectivity}\label{sec-suff}

Let us denote by $\AP$ the Zariski closure of the projection of $\DP$ via the map $(t,\ODelta,\overline{x},z)\mapsto (t,\ODelta,\overline{x})$. We will apply repeatedly the Extension Theorem (Theorem \ref{th:extension}) to lift a point ${\overline{x}}$ in $\overline{X(\mP)}$ (i.e. a solution of $I(\DP)\cap\C[{\overline{x}}]=I(\AP)\cap\C[{\overline{x}}]$), first to $\AP$, and later to $\DP$. In the first step, we will control the coefficients of the generator of the ideal $I(\AP)\cap\C[{x_1},t]$ (and similarly for {$x_i$ with $i>1$}) so that for every point of $\overline{X(\mP)}$ there exists a corresponding value of $t$ (see Theorem \ref{th:main}). To this end we will consider a product involving the conjugates of the numerator of the first component of $\mP$ whose degree should be preserved upon simplification of roots. This is the motivation for the next definition. From here onwards we will denote the imaginary unit with an upright $\mathrm{i}$ as opposed to the italic $i$ for a variable.

\begin{definition}\label{def:Rf}
For $k=1,\ldots,m$, denote $\gamma_k=\exp(2\pi\mathrm{i}/e_k)$. Let $f\in\C[t,\ODelta]$. We define $f_m=f(t,\Delta_1,\ldots,\Delta_m)$ and recursively for $k=m,\ldots,1$,
\begin{equation}\label{eq:Fk}
 F_k=\prod_{j=1}^{e_k} f_k(t,\Delta_1,\ldots,\Delta_k\cdot\gamma_k^j),
\end{equation}
\begin{equation}\label{eq:fk}
 \quad f_{k-1} = \mathrm{rem}(F_k,E_k)\in \C[t,\Delta_1,\ldots,\Delta_{k-2}][\Delta_{k-1}] \mbox{ (see Lemma \ref{lema:unavariablemenos})}.
\end{equation}
Then we define the \textbf{normalized reminder} of $f$ as $R(f)=f_0\in\C[t]$. 

If $f\in \C[t]$, observe that $R(f)=f$.
\end{definition}

\begin{lemma}\label{lema:unavariablemenos}
With the notation of Definition \ref{def:Rf}, $f_{k-1}\in\C[t,\Delta_1,\ldots,\Delta_{k-1}]$.
\end{lemma}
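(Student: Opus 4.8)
The plan is to show that each product $F_k$, viewed as a polynomial in $\Delta_k$ with coefficients in $\C[t,\Delta_1,\ldots,\Delta_{k-1}]$, only involves powers of $\Delta_k$ that are multiples of $e_k$; once this is established, reducing modulo $E_k=\Delta_k^{e_k}-g_k$ removes $\Delta_k$ altogether and lands the result in $\C[t,\Delta_1,\ldots,\Delta_{k-1}]$. I would argue by descending induction on $k$: the base case is $f_m=f\in\C[t,\ODelta]$, and the inductive step is exactly the statement of the lemma, so it suffices to prove it for a fixed $k$ under the hypothesis $f_k\in\C[t,\Delta_1,\ldots,\Delta_k]$. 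Writing $f_k=\sum_{\ell} a_\ell(t,\Delta_1,\ldots,\Delta_{k-1})\,\Delta_k^\ell$, the product $F_k$ in \eqref{eq:Fk} is again a polynomial in $\Delta_k$ over the same coefficient ring.

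The heart of the argument is an invariance observation. I would show that $F_k$ is unchanged under the substitution $\Delta_k\mapsto\gamma_k\Delta_k$. Indeed, replacing $\Delta_k$ by $\gamma_k\Delta_k$ in \eqref{eq:Fk} turns the factor indexed by $j$ into $f_k(t,\Delta_1,\ldots,\Delta_k\gamma_k^{j+1})$, and since $\gamma_k^{e_k}=1$ the index $j+1$ runs over the same residues modulo $e_k$ as $j$ does; hence the substitution merely permutes the $e_k$ factors cyclically and leaves the product fixed. Expanding $F_k=\sum_n c_n\Delta_k^n$ with $c_n\in\C[t,\Delta_1,\ldots,\Delta_{k-1}]$, invariance forces $c_n(\gamma_k^n-1)=0$ for every $n$, and because $\gamma_k$ is a primitive $e_k$-th root of unity we have $\gamma_k^n=1$ if and only if $e_k\mid n$. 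As $\C[t,\Delta_1,\ldots,\Delta_{k-1}]$ is a domain, this yields $c_n=0$ unless $e_k\mid n$, so that $F_k=\sum_s b_s\,\Delta_k^{e_k s}$ with $b_s\in\C[t,\Delta_1,\ldots,\Delta_{k-1}]$.

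Finally I would carry out the reduction modulo $E_k$. Since $E_k$ is monic of degree $e_k$ in $\Delta_k$ and $g_k=g_k(t,\Delta_1,\ldots,\Delta_{k-1})$ is free of $\Delta_k$, writing $F_k=\sum_s b_s\,(\Delta_k^{e_k})^s$ and using $\Delta_k^{e_k}\equiv g_k \pmod{E_k}$ gives $F_k\equiv\sum_s b_s\,g_k^s \pmod{E_k}$. The right-hand side has $\Delta_k$-degree $0$, in particular degree strictly less than $e_k$, so it is precisely the remainder; that is, $f_{k-1}=\sum_s b_s\,g_k^s\in\C[t,\Delta_1,\ldots,\Delta_{k-1}]$, as claimed. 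The only delicate point is the invariance step: one must be sure that taking the product over \emph{all} $e_k$ conjugates (rather than a proper subset) is exactly what produces a $\gamma_k$-invariant polynomial, and hence only exponents divisible by $e_k$; everything after that is a formal consequence of $E_k$ being monic in $\Delta_k$.
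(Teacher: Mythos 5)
Your proof is correct and follows essentially the same route as the paper's: both establish that $F_k$ is invariant under $\Delta_k\mapsto\gamma_k\Delta_k$, deduce that only exponents divisible by $e_k$ survive, and then reduce modulo $E_k$ by substituting $\Delta_k^{e_k}=g_k$. You merely spell out the cyclic-permutation justification of the invariance, which the paper leaves as ``by construction.''
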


\begin{proof}
By construction,
\[
F_k(t,\Delta_1,\ldots,\Delta_k) = F_k(t,\Delta_1,\ldots,\gamma_k\Delta_k).
\]
Writing $F_k$ as a polynomial in $\Delta_k$, namely $F_k=\sum_h a_h\cdot\Delta_k^h$, we have $a_h=a_h\cdot\gamma_k^h$. Therefore, if $e_k$ does not divide $h$, $\gamma_k^h\neq1$ so $a_h=0$. It follows that $F_k$ is a polynomial in $t,\Delta_1,\ldots,\Delta_{k-1},\Delta_k^{e_k}$:
\[
F_k=\sum_{l}a_{le_k}\Delta_l^{le_k},
\]
so its remainder when dividing by $E_k$ with respect to $\Delta_k$ is:
\[
f_{k-1}= \sum_{l}a_{le_k}g_k(t,\Delta_1,\ldots,\Delta_{k-1})^{l}.
\]
\end{proof}

\begin{remark}\label{rem:R(f)}\
\begin{enumerate}
\item $f_{k-1}$ is the resultant of $f_k$ and $E_k$ with respect to $\Delta_k$, since the resultant is the product of the evaluations of $f_k$ in the roots of $E_k$. In other words, $R(f)$ is obtained by nested resultants.
\item We defined $f_{k-1}$ as a remainder by a polynomial but using the normal form instead one obtains the same $R(f)$.
\item In general, $R(f)$ is a power of the norm of $f(t,\odelta)$ as an algebraic element over $\C(t)$.
\item If the radicals are not nested, i.e. every $g_k\in\C[t]$, we can substitute the recursive definition by the product
\[
R(f) = N\left(\prod_{i_1,\ldots,i_m} f(t,\gamma_1^{i_1}\Delta_1,\ldots\gamma_m^{i_m}\Delta_m)\right).
\]
This is not true in general, as the next example illustrates.
\end{enumerate}
\end{remark}

\begin{example}
Consider $E_1=\Delta_1^2-t$, $E_2=\Delta_2^2-\Delta_1-1$ and $f=\Delta_1\Delta_2+t$. If we compute the whole product instead of taking remainders after conjugating each variable separately, we obtain
\[
N\left(\prod f(t,\pm\Delta_1,\pm\Delta_2)\right) = (-2t^3+2t^2)\Delta_1+t^4-t^3+t^2.
\]
Following Definition \ref{def:Rf}, we get $R(f)=t^4-3t^3+t^2$.
\end{example}

Since conjugation does not change the degree, from \eqref{eq:Fk}, we get $\deg(F_k)=e_k\cdot\deg(f_k)$; on the other hand $\deg(\Delta_k)=\deg(g_k)/e_k$ by Definition \ref{def:grado}, so reducing $F_k$ by $E_k$ (i.e. substituting $\Delta_k^{e_k}=g_k$ in $F_k$) will not change the degree in the general case. Therefore, the degree of $R(f)$ is, at most, $\deg(f)\cdot e_1\cdots e_m$, with equality as the expected case. However, Example \ref{ej:eje2} shows that this is not always true, due to cancellations upon reduction. Now we introduce a definition for the exceptions.

\begin{definition}\label{def:guilty}
Let $f\in\C[t,\ODelta]$. We define $f$ to be \textbf{guilty} when
\[
 \deg(f)\cdot e_1\cdots e_m > {\deg(R(f))}.
\]
\end{definition}

\begin{remark}\label{rem-rat-guilty}
By definition, no polynomial in $\C[t]$ is guilty.
\end{remark}

\begin{example}[continuation of Example \ref{ej:eje2}]
We have
\[
 f_1 = f = t-\Delta_1, \ F_1 = (t-\Delta_1)(t+\Delta_1), \ f_0 = R(f) = 1.
\]
Since $\deg(f)=1$ and $e_1=2$, $f$ is guilty. If we had started instead with $y=t-\sqrt{t-1}$, we would have had the same $f=t-\Delta_1$ but now with $E_1=\Delta_1^2-(t-1)$, so that $R(f) = t^2-t+1$ and $f$ would not be guilty; one can also check that the parametrization would be surjective.
\end{example}

With the previous definitions we pretend to control the ``image of infinity'' as in rational parametrizations. However, there is an extra issue that may appear in with radical, which is the posibility of indeterminations of type $0/0$. These happen for those values of $t, \ODelta$ satisfying the equations $\oE$, $p_i$ and $q_i$, where $i$ indicates the component of $\mP$ where the indetermination occurs. 
Now we can state the main theorem.


\begin{theorem}\label{th:main}
Let $\mP$ be as in \eqref{eq:P}, with $N(p_i)=p_i$ and $N(q_i)=q_i$ for {$i=1,\ldots,n$}, and $\oE$ be the sequence of defining polynomials of the radical expressions.
Suppose that:
\begin{enumerate}
\item there exists $i$ such that $p_i$ is not guilty and $\deg p_i>\deg q_i$;
\item for {all} $i$, the ideal $I_i(\mP):=\langle \oE,p_i,q_i\rangle$ is the whole $\C[t,\ODelta]$.
\end{enumerate}
Then the parametrization is surjective.
\end{theorem}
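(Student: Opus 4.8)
The plan is to establish the reverse inclusion $\overline{X(\mP)}\subseteq X(\mP)$; since $X(\mP)\subseteq\overline{X(\mP)}$ holds trivially, this makes $X(\mP)$ Zariski closed and hence $\mP$ surjective by Definition~\ref{def:sup}. Fix an arbitrary $\overline{a}=(a_1,\ldots,a_n)\in\overline{X(\mP)}=V(I(\AP)\cap\C[\overline{x}])$, and, after relabelling the components of $\mP$, assume the index in hypothesis~(1) is $i=1$. The idea is to reconstruct a preimage of $\overline{a}$ by adjoining coordinates one at a time through the Extension Theorem (Theorem~\ref{th:extension}): first a parameter value $t_0$, then values of the radicals $\Delta_1,\ldots,\Delta_m$, reaching a point of $\AP$; hypothesis~(2) will then forbid the $0/0$ indeterminations and let us descend to $\DP$, certifying $\overline{a}\in X(\mP)$.

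\textbf{Key lemma (the parameter relation).} Put $h_1:=p_1-x_1q_1\in\C[x_1][t,\ODelta]$. Since $h_1$ and $E_1,\ldots,E_m$ are among the defining equations of $\DP$ and none involves $z$, they vanish on $\AP$; carrying out the construction of Definition~\ref{def:Rf} over the coefficient ring $\C[x_1]$ and using that each $F_k$ is a multiple of its $j=e_k$ factor shows $R(h_1)\in\langle h_1,\oE\rangle\subseteq I(\AP)$, whence $R(h_1)\in I(\AP)\cap\C[x_1,t]$. The heart of the matter is a degree computation. As $\deg p_1>\deg q_1$ and $x_1$ is a scalar of weight $0$, the term $x_1q_1$ has weighted degree $\deg q_1<\deg p_1$, so the weighted leading form of $h_1$ equals that of $p_1$. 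Non-guiltiness of $p_1$ means $\deg R(p_1)=e_1\cdots e_m\deg p_1$, which forces every inequality $\deg f_{k-1}\le e_k\deg f_k$ arising from \eqref{eq:Fk}--\eqref{eq:fk} to be an equality, i.e. no leading form is annihilated upon reduction modulo $E_k$. Since $h_1$ and $p_1$ share the same leading form, an induction down the tower shows that at each stage the surviving top form for $h_1$ is the same as for $p_1$; hence the top weighted form of $R(h_1)$ coincides with that of $R(p_1)$. As $R(p_1)\in\C[t]$, this top form is $c\,t^{e_1\cdots e_m\deg p_1}$ with $c\in\C\setminus\{0\}$, so, regarded in $\C[x_1][t]$, the polynomial $R(h_1)$ has $t$-leading coefficient the nonzero constant $c$, independent of $x_1$.

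\textbf{Lifting to $\AP$ and descending to $\DP$.} Because $\overline{a}\in V(I(\AP)\cap\C[\overline{x}])$, the base-variable elimination ideal of $I(\AP)\cap\C[t,\overline{x}]$, while $R(h_1)\in I(\AP)\cap\C[t,\overline{x}]$ has $t$-leading coefficient the nonzero constant $c$, the Extension Theorem provides $t_0\in\C$ with $(t_0,\overline{a})\in V(I(\AP)\cap\C[t,\overline{x}])$. We then adjoin the radicals in turn: for $k=1,\ldots,m$ the polynomial $E_k=\Delta_k^{e_k}-g_k$ lies in $I(\AP)\cap\C[t,\Delta_1,\ldots,\Delta_k,\overline{x}]$ and has leading coefficient $1$ as a polynomial in $\Delta_k$, so, applying the Extension Theorem $m$ times to the nested elimination ideals of $I(\AP)$, we obtain values $\delta_1,\ldots,\delta_m$ with $(t_0,\odelta,\overline{a})\in V(I(\AP))=\AP$. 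At this point $\oE(t_0,\odelta)=0$ and $p_i(t_0,\odelta)=a_i\,q_i(t_0,\odelta)$ for every $i$. If some $q_i(t_0,\odelta)=0$ then also $p_i(t_0,\odelta)=0$, so $(t_0,\odelta)\in V(I_i(\mP))$, contradicting hypothesis~(2), which asserts $I_i(\mP)=\C[t,\ODelta]$ and hence $V(I_i(\mP))=\emptyset$. Therefore every $q_i(t_0,\odelta)\neq0$, so $\mP(t_0,\odelta)$ is well defined and equals $\overline{a}$, and $z_0:=1/\mathrm{lcm}(q_1,\ldots,q_n)(t_0,\odelta)$ exists; thus $(t_0,\odelta,\overline{a},z_0)\in\DP$ and $\overline{a}\in X(\mP)$. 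As $\overline{a}$ was arbitrary, $\overline{X(\mP)}=X(\mP)$.

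\textbf{Main obstacle.} The delicate point is the leading-form bookkeeping of the key lemma: one must verify that non-guiltiness of $p_1$ forces equality at \emph{every} stage of the degree chain, not merely at its end, and that at each reduction modulo $E_k$ the common top form is exactly the part that survives, so that the $x_1$-dependent lower-order terms of $h_1$ can never feed into the $t$-leading coefficient of $R(h_1)$. Once this is secured, the two applications of the Extension Theorem and the use of hypothesis~(2) to exclude $q_i=0$ are routine.
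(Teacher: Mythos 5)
Your proposal is correct and follows essentially the same route as the paper: lift $\overline{a}$ to $t$ via the Extension Theorem using the constant $t$-leading coefficient of $R(x_1q_1-p_1)$, then to $\ODelta$ via the monic $E_k$, and finally use hypothesis (2) to exclude $q_i=0$; your ``key lemma'' is exactly the content of the paper's Lemma~\ref{lemma:G1}, merely phrased in terms of weighted leading forms instead of comparing the degrees of the coefficients with respect to $x_1$.
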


\begin{proof}
Recall that $\AP$ is the Zariski closure of the projection $(t,\ODelta,\overline{x},z) \mapsto (t,\ODelta,\overline{x})$ of $\DP$. 
We will repeatedly apply the Extension Theorem (Theorem \ref{th:extension}) to lift a point ${\overline{x}}\in\overline{X(\mP)}=V(I(\DP)\cap\C[{\overline{x}}])$, firstly, to $\AP$, and, later, to $\DP$.

Without loss of generality, assume that hypothesis 1 holds for $i=1$. Let $F_1({x_1},t)$ be the generator of the ideal $I(\AP)\cap\C[{x_1},t]$. This makes sense because each component of $\AP$ has dimension one (see \cite[Th. 3.11]{SendraSevillaVillarino2017}) and its projection to the variables $({x_1},t)$ is finite (in \cite[Proof of Th. 3.11]{SendraSevillaVillarino2017}, the {$(t,\ODelta,\overline{x},z) \mapsto (t,\ODelta,\overline{x})$ of $\DP$} is finite and, since the dimension is one, the missing points of the $(t,\ODelta,\overline{x},z) \mapsto (t,\ODelta,\overline{x})$ are a finite set). Let
\[
G_1({x_1},t) = R({x_1}q_1-p_1). 
\]
By construction $G_1\in I(\AP)$ so $F_1$ divides $G_1$. In Lemma \ref{lemma:G1} below we show that, if we consider $G_1$ as a polynomial in {$x_1$}, its constant coefficient has greater degree in $t$ than the other coefficients. Therefore the leading coefficient of $G_1$ with respect to $t$ is constant, and the same thing happens to $F_1$. Then, every point of $\overline{X(\mP)}$ can be lifted to a value of $t$ by Theorem \ref{th:extension}. But since each polynomial $E_i=\Delta_i^{e_i}-g_i$ is monic with respect to $\Delta_i$ and they are in $I(\AP)$, every point can be lifted to the $\ODelta$ as well.

Finally, consider a point ${\overline{x}_0}$ in the curve and all its lifts to the values $t_0,\ODelta_0$. We need to lift to the variable $z$ via the condition $z\cdot \mathrm{lcm}({q_1,\ldots,q_n})=1$.

Since $t_0,\ODelta_0,{\overline{x}_0}$ satisfy the equations {$q_ix_i-p_i=0$ for all $i\in\{1,\ldots,n\}$} and the $\oE$ equations, if any $q_i(t_0,\ODelta_0)=0$ we would also have $p_i(t_0,\ODelta_0)=0$ contradicting hypothesis 2.
\end{proof}

\begin{remark}\label{rem-rat} Let $\mP(t)=(p_1(t)/q_1(t),\ldots,p_n(t)/q_n(t))\in \C(t)^n$ be a rational parametrization in reduced form; that is, $\gcd(p_i,q_i)=1$ for all $i\in \{1,\ldots,n\}$. Let us assume that there exists $i_0\in \{1,\ldots,n\}$ such that $\deg(p_{i_0})> \deg(q_{i_0})$. Then $\mP(t)$ satisfies the hypotheses of Theorem \ref{th:main}. 

\vspace*{1mm}

\noindent 
Indeed, by Def. \ref{def-NF}, it holds that $N(p_i)=p_i$ and $N(q_i)=q_i$ for all $i$. By 
Remark \ref{rem-rat-guilty}, $p_{i_0}$ is not guilty and $p_{i_0}/q_{i_0}$ satisfies hypothesis (1). Concerning hypothesis (2) one has that, since $\gcd(p_i,q_i)=1$, then 
$1\in I_i(\mP)=\langle p_i,q_1\rangle$. So $I_i(\mP)=\C[t,\ODelta]$.
\end{remark}

\begin{corollary}\label{cor-pol}
Let $\mP$ be as in \eqref{eq:P}, with all its components being reduced polynomials in $\C[t,\ODelta]$ (i.e. $N(p_i)=p_i$ and $q_i=1$ for {$i=1,\ldots,n$}). If hypothesis (1) in Theorem \ref{th:main} is satisfied, then $\mP$ is surjective.
\end{corollary}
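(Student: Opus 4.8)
The plan is to verify that Corollary \ref{cor-pol} is an immediate specialization of Theorem \ref{th:main}, so the entire task reduces to checking that the polynomial case automatically satisfies hypothesis (2) of the theorem, since hypothesis (1) is granted by assumption. First I would observe that the hypotheses required to invoke Theorem \ref{th:main} are precisely: that each numerator is in normal form ($N(p_i)=p_i$), which is part of the stated assumptions; that each denominator is in normal form ($N(q_i)=q_i$), which holds trivially because $q_i=1$ and $N(1)=1$; and that $\oE$ is the sequence of defining polynomials, which is inherited from the ambient setup. With the setup matched, hypothesis (1) is exactly what we assume, so the only thing left to establish is hypothesis (2).

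The key step is therefore to show that for all $i$, the ideal $I_i(\mP)=\langle\oE,p_i,q_i\rangle=\langle\oE,p_i,1\rangle$ equals the whole ring $\C[t,\ODelta]$. This is where the polynomial hypothesis $q_i=1$ does all the work: since $q_i=1\in I_i(\mP)$, the ideal contains the constant $1$ and hence coincides with $\C[t,\ODelta]$. This rules out the $0/0$ indeterminations that hypothesis (2) was designed to exclude, precisely because a polynomial parametrization has constant denominators that never vanish. Having checked both hypotheses, I would conclude by a direct appeal to Theorem \ref{th:main} that $\mP$ is surjective.

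I do not anticipate any genuine obstacle here, since the corollary is a clean degenerate case rather than a new result; the ``main difficulty'' is merely the bookkeeping of confirming that the normal-form conditions on the denominators hold vacuously and that the presence of the unit $q_i=1$ forces the improper ideal. The proof is thus essentially a two-line deduction: hypothesis (1) is assumed, hypothesis (2) follows from $1\in I_i(\mP)$, and Theorem \ref{th:main} yields surjectivity.
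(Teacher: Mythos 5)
Your proposal is correct and follows exactly the same route as the paper: the only content is verifying hypothesis (2), which holds because $q_i=1\in I_i(\mP)=\langle\oE,p_i,1\rangle=\C[t,\ODelta]$, after which Theorem \ref{th:main} applies. Your extra remark that $N(q_i)=q_i$ holds trivially for $q_i=1$ is a harmless piece of bookkeeping the paper leaves implicit.
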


\begin{proof}
Let us see that the second hypothesis of Theorem \ref{th:main} is satisfies. Let $p_i\in\C[t,\ODelta]$ be the $i$-th component of $\mP$, then $I_i(\mP)=\langle\overline{E},p_i, 1\rangle=\langle1\rangle=\C[t,\ODelta]$. 
\end{proof}

\begin{corollary}\label{cor-3}
Let $\mP$ be such that some of its components are rational and one of its rational components satisfies the degree condition. If every non-rational components $p_i/q_i$ is polynomial in $\C[t,\ODelta]$, and $N(p_i)=p_i$, then $\mP$ is surjective. 
\end{corollary}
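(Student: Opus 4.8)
The plan is to reduce everything to Theorem \ref{th:main} by verifying its two hypotheses; no new machinery is needed, since this corollary simply merges the rational situation of Remark \ref{rem-rat} with the polynomial situation of Corollary \ref{cor-pol}. I would begin by fixing notation: write each rational component as $p_i/q_i$ with $p_i,q_i\in\C[t]$ in reduced form (so that $\gcd(p_i,q_i)=1$), and recall that by hypothesis every non-rational component has $q_i=1$ and $N(p_i)=p_i$. For the rational components, having $p_i,q_i\in\C[t]$ already gives $N(p_i)=p_i$ and $N(q_i)=q_i$ by Definition \ref{def-NF}, so the normal-form assumptions required by Theorem \ref{th:main} hold across all components.

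Next I would check hypothesis (1). Let $p_{i_0}/q_{i_0}$ be the rational component satisfying the degree condition, so that $\deg p_{i_0}>\deg q_{i_0}$ with $p_{i_0}\in\C[t]$. Since $p_{i_0}\in\C[t]$, Remark \ref{rem-rat-guilty} immediately gives that $p_{i_0}$ is not guilty, and therefore hypothesis (1) of Theorem \ref{th:main} is satisfied at $i_0$.

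Then I would verify hypothesis (2) for every $i$, splitting into two cases. For a non-rational (polynomial) component, $q_i=1$ forces $1\in I_i(\mP)=\langle\oE,p_i,1\rangle$, so $I_i(\mP)=\C[t,\ODelta]$, exactly as in the proof of Corollary \ref{cor-pol}. For a rational component in reduced form, $\gcd(p_i,q_i)=1$ yields $1\in\langle p_i,q_i\rangle\subseteq I_i(\mP)$, again giving $I_i(\mP)=\C[t,\ODelta]$, as in Remark \ref{rem-rat}. Hence hypothesis (2) holds for all $i$, and Theorem \ref{th:main} delivers surjectivity.

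The only point requiring care, and the closest thing to an obstacle in an otherwise routine argument, is the implicit assumption that the rational components are presented in reduced form: this is precisely what makes the gcd argument deliver $1\in\langle p_i,q_i\rangle$. Without it, hypothesis (2) could fail at a rational component and the conclusion would not follow. Everything else is a direct bookkeeping combination of the two special cases already established.
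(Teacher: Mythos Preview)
Your proposal is correct and follows essentially the same route as the paper: both reduce to Theorem~\ref{th:main} by combining the rational case of Remark~\ref{rem-rat} (for hypothesis~(1) and for hypothesis~(2) on rational components) with the polynomial case of Corollary~\ref{cor-pol} (for hypothesis~(2) on non-rational components). Your version is in fact more explicit than the paper's terse proof, and you are right to flag that the rational components must be taken in reduced form for the $\gcd$ argument to yield $1\in\langle p_i,q_i\rangle$; the paper leaves this implicit.
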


\begin{proof}
Taking into account Def. \ref{def-NF}, and the hypothesis on the non-rational components of $\mP$, we have that $N(p_i)=p_i$ and $N(q_i)=q_i$ for all $i$. Also, reasoning as in Remark \ref{rem-rat}, we know that the hypothesis (1) of Theorem \ref{th:main} holds. Now the result follows from Corollary \ref{cor-pol}.
\end{proof}

\begin{remark}
The curve of $\C^n$ defined by the complex polynomials $$\{ y_{i}^{n_i}-g_i(x)\}_{i=1,\ldots,n-1},$$ and $n_i\in \N$, can be surjectively parametrized as
\[
\mP=(t,\Delta_1,\ldots,\Delta_{n-1})
\]
where $E_i:=\Delta_{i}^{n_{i}}-g_i(t)$.

\vspace*{1mm}

\noindent
We observe that $N(\Delta_i)=\Delta_i$. Thus, the claim follows from Corollary \ref{cor-3}.
\end{remark}

\begin{remark}\label{rem-fermat}
As particular examples of the previous remark, we have the plane curves $x^n+y^n=a$, with $a\in \C$.
\end{remark}

\begin{lemma}\label{lemma:G1}
With the notation and hypotheses of Theorem \ref{th:main}, let $f=x q -p$. Then, abusing the notation of Definition \ref{def:Rf}, for every $k=0,\ldots,m$, the constant coefficient of $f_k$ with respect to $x$ has degree $e_{k+1}\cdots e_m\cdot\deg(p_1)$ which is strictly greater than that of the other coefficients. In particular, this applies to $G_1=R(f)=f_0$.
\end{lemma}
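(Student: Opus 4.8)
The goal of Lemma \ref{lemma:G1} is to track, through the recursive construction of Definition \ref{def:Rf}, the degree in $t$ of the constant coefficient (with respect to $x$) of each $f_k$, and to show it dominates all other coefficients. My plan is to prove this by downward induction on $k$, from $k=m$ to $k=0$, following exactly the recursion \eqref{eq:Fk}--\eqref{eq:fk} that defines $f_{k-1}$ from $f_k$.

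\textbf{Base case.} First I would examine $f_m=f=xq_1-p_1$ directly. Writing $f$ as a polynomial in $x$, its degree-one coefficient is $q_1$ and its constant coefficient is $-p_1$. By hypothesis (1) of Theorem \ref{th:main} we have $\deg(p_1)>\deg(q_1)$, so the constant coefficient $-p_1$ has degree $\deg(p_1)=e_{m+1}\cdots e_m\cdot\deg(p_1)$ (empty product equal to $1$), strictly greater than the degree of the $x$-coefficient. This establishes the claim for $k=m$.

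\textbf{Inductive step.} Assume the statement holds for $f_k$: its constant coefficient $c_0^{(k)}$ in $x$ has degree $D_k:=e_{k+1}\cdots e_m\deg(p_1)$, strictly larger than the degree of every other $x$-coefficient $c_j^{(k)}$. I then need to understand how the operation $f_{k-1}=\mathrm{rem}(F_k,E_k)$ with $F_k=\prod_{j=1}^{e_k} f_k(t,\ldots,\Delta_k\gamma_k^j)$ transforms the $x$-coefficients. The key observation is that conjugation $\Delta_k\mapsto\Delta_k\gamma_k^j$ acts only on the $\ODelta$-variables and fixes both $t$ and $x$; hence it does not disturb the $x$-grading. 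Therefore, as a polynomial in $x$, the constant coefficient of $F_k$ is exactly $\prod_{j=1}^{e_k} c_0^{(k)}(t,\ldots,\Delta_k\gamma_k^j)$, namely the product of the $e_k$ conjugates of $c_0^{(k)}$, while every other $x$-coefficient of $F_k$ is a sum of products in which at least one of the $e_k$ factors is some $c_j^{(k)}$ with $j\neq 0$. Taking remainders by $E_k$ is again $x$-linear, so it commutes with reading off $x$-coefficients. Since the constant coefficient is a product of conjugates of a single element, its remainder modulo $E_k$ does not lose degree (this is the non-guilty phenomenon, by the degree-multiplicativity argument preceding Definition \ref{def:guilty}, applied to $c_0^{(k)}$); here I would invoke that $p_1$ is not guilty, which passes to $c_0^{(k)}$, to guarantee $\deg$ of this remainder is exactly $e_k D_k=D_{k-1}$. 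For the other coefficients, the degree before reduction is at most $\deg c_j^{(k)}+(e_k-1)D_k< e_k D_k$ by the strict inequality in the inductive hypothesis, and reduction by $E_k$ can only weakly decrease degree; this yields the required strict domination for $f_{k-1}$.

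\textbf{Main obstacle.} The delicate point is the claim that reduction modulo $E_k$ does not drop the degree of the constant coefficient, since in general reducing $\Delta_k^{e_k}=g_k$ can cause cancellations—this is precisely the guiltiness phenomenon of Definition \ref{def:guilty}. I expect the crux of the argument to be isolating why the hypothesis ``$p_1$ is not guilty'' is exactly what rules this out for the constant coefficient, while the other coefficients are handled merely by the strict degree gap and need no non-guiltiness assumption. The cleanest formulation would likely be to recognize, via Remark \ref{rem:R(f)}(1), that the constant-coefficient strand of the recursion reproduces the nested-resultant computation defining $R(p_1)=f_0|_{x=0}$, so that its final degree is $\deg R(p_1)=e_1\cdots e_m\deg(p_1)$ precisely because $p_1$ is not guilty; tracking this alongside the strict inequality for all other coefficients then gives the conclusion for $G_1=f_0=R(f)$.
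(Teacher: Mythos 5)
Your proposal is correct and follows essentially the same route as the paper's proof: descending induction on $k$, observing that the constant $x$-coefficient of $F_k$ is the product of the $e_k$ conjugates of the previous constant coefficient (so that this strand reproduces, up to sign, the nested computation of $R(p_1)$ and non-guiltiness of $p_1$ prevents any degree drop there), while every other $x$-coefficient of $F_k$ involves at least one non-constant factor and is therefore strictly dominated, with reduction by $E_k$ only weakly decreasing degrees. The only detail worth making explicit is the sign (the constant coefficient is $-p_1$, so the strand agrees with the $R(p_1)$ computation only up to a factor $(-1)^{e_k}$ at each stage), which the paper also notes in passing.
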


\begin{proof}
We apply descending induction on $k$; the case $k=m$ is trivial by hypothesis.

Assume it true for $k$, that is, if $f_k=a_rx^r+\cdots+a_0$ with $a_j\in\C[t,\Delta_1,\ldots,\Delta_k]$ then $\deg(a_0)=e_{k+1}\cdots e_m\cdot\deg(p_1)>\deg(a_j)$ for $j>0$. Let
\[
 F_k = \prod_{i=1}^{e_k}\left( a_r(t,\Delta_1,\ldots,\Delta_k\cdot\gamma_k^i)x^r+\cdots+a_0(t,\Delta_1,\ldots,\Delta_k\cdot\gamma_k^i)\right) = b_{re_k}x^{re_k}+\cdots+b_0
\]
Observe that $\deg(b_0)=\deg(a_0)\cdot e_k=e_k\cdots e_m\cdot\deg(p_1)$, the last equality by induction hypothesis.

For $j>0$, $b_j$ is a sum of products of exactly $e_k$ of the conjugated $a$'s, not all being $a_0$. Since the degree of $a_0$ is strictly greater than the degree of the others, all summands in $b_j$ have degree strictly less than $e_k\cdot\deg(a_0)=\deg(b_0)$.

Now define $\tilde{b}_j$ such that $\tilde{b}_j(t,\Delta_1,\ldots,\Delta_{k-1},\Delta_k^{e_k})=b_j$. Then
\[
 f_{k-1}=\tilde{b}_{re_k}(t,\Delta_1,\ldots,\Delta_{k-1},g_k)x^{re_k}+\cdots+\tilde{b}_0(t,\Delta_1,\ldots,\Delta_{k-1},g_k).
\]
The constant term of $f_{k-1}$ above coincides with the $k-1$ step in the computation of $R(p_1)$, possibly up to a sign. Since $p_1$ is not guilty, in each reduction step for $R(p_1)$ the degree does not drop. This shows that
\[
 \deg(\tilde{b}_0(t,\Delta_1,\ldots,\Delta_{k-1},g_k))=\deg(b_0)>\deg(b_j)\geq\deg(\tilde{b}_j(t,\Delta_1,\ldots,\Delta_{k-1},g_k)).
\] 
\end{proof}

\section{On the hypotheses for surjectivity}\label{sec-hypo}


Deciding whether a polynomial is guilty or not can be done by defining and checking a sort of leading coefficient for our polynomials. We will see this for $m=1$ (i.e. $\ODelta=(\Delta_1)$). The generalization to $m>1$ is natural in the unnested case (i.e. $g_i(t,\Delta_1,\ldots,\Delta_{i-1})\in\C[t]$). For the nested case, see Remark \ref{rm:caracterizacion_de_culpabilidad}. 

Let us denote $g(t)=g_1(t)=a_0+\cdots+a_kt^k$, $\Delta=\Delta_1$, $\delta=\delta_1$. Consider an already reduced polynomial 
\[
f(t,\Delta)=c_0(t)+c_1(t)\Delta+\cdots+c_{e-1}(t)\Delta^{e-1}\in \C[t,\Delta].
\]
Since we defined $\deg(\Delta)=k/e$, naming $\partial c_i:=\deg_t(c_i(t))$, we have that for $i\in\{0,\ldots,e-1\}$
\[
M:=\deg(f(t,\Delta))=\max\{\deg(c_i(t)\Delta^i)\}=\max\left\{\partial
c_i+\frac{k}{e}i\right\}.
\]
According to Remark \ref{rem:R(f)},
\[
R(f)=R(t)=\res_{\Delta}(f(t,\Delta),\Delta^e-g(t))=\prod_{r=0}^{e-1} f(t,\gamma^r\delta(t)),
\]
with $\gamma=\exp(2\pi\mathrm{i}/e)$. Now, define
\[
J=\{i\in\{0,\ldots,e-1\}\,\,/\,\,\partial c_i+\frac{k}{e}i=M\}
\]
and 
\[
f_l(\Delta)=\sum_{i\in J} c_{i \partial c_i}\Delta^i\in \C[\Delta],
\]
where $c_{i \partial c_i}$ is the leading coefficient of $c_i(t)=c_{i0}+c_{i1}t+\cdots+c_{i\partial c_i}t^{\partial
c_i}$. For any factor of $R(t)$
\[
f(t,\gamma^r\delta)=c_0(t)+c_1(t)\gamma^r\sqrt[e]{g(t)}+\cdots+c_{e-1}(t)\gamma^{r(e-1)}\sqrt[e]{(g(t))^{e-1}},
\]
its coefficient of degree $M$ is 
\[
\sum_{i\in J}\,c_{i\partial c_i}\gamma^{ri}(a_k)^{\frac{i}{e}}
\]
(recall $a_k$ is the leading coefficient of $g(t)$). With this notation, we state the following result.

\begin{lemma}\label{lma:culpa_facil}
With the above notation, the following are equivalent:
\begin{enumerate}
\item $f(t,\Delta)$ is not guilty.
\item For all $r\in \{0,\ldots,e-1\}$, $\displaystyle{\sum_{i\in
J}\,c_{i\partial c_i}\gamma^{ri}a_k^{i/e}\neq 0}$.
\item $\res_{\Delta}(f_{l}(\Delta),\Delta^e-a_k)\neq 0$.
\end{enumerate}
\end{lemma}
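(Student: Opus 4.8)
The plan is to establish the two equivalences (1)$\Leftrightarrow$(2) and (2)$\Leftrightarrow$(3) by routing everything through the nonvanishing of the $e$ scalars
\[
\lambda_r:=\sum_{i\in J}c_{i\partial c_i}\gamma^{ri}a_k^{i/e},\qquad r=0,\ldots,e-1,
\]
which the discussion preceding the statement already identifies as the coefficient of the top weighted degree $M$ in the factor $f(t,\gamma^r\delta)$ of $R(f)$. So condition (2) is literally ``$\lambda_r\neq0$ for all $r$'', and the two equivalences amount to reading this off from the degree of $R(f)$ on one side and from a resultant on the other.

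For (1)$\Leftrightarrow$(2), I would use that $R(f)=\prod_{r=0}^{e-1}f(t,\gamma^r\delta(t))$ (Remark \ref{rem:R(f)} and the preamble), where $\delta(t)$ is a fixed branch of $\sqrt[e]{g(t)}$. Expanding each factor as a Puiseux series in $t$ at infinity, $\delta(t)=a_k^{1/e}t^{k/e}(1+O(1/t))$, so the summand $c_i(t)\gamma^{ri}\delta^i$ has order $\partial c_i+\tfrac{k}{e}i\le M$, with equality exactly for $i\in J$. Hence the leading term of $f(t,\gamma^r\delta)$ is $\lambda_r\,t^M$ when $\lambda_r\neq0$, and the order drops strictly below $M$ when $\lambda_r=0$. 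Since Puiseux series form a field, the order of the product $R(f)$ is the sum of the orders of the factors, and when no $\lambda_r$ vanishes the leading coefficient is $\prod_r\lambda_r\neq0$. Therefore $\deg(R(f))=eM$ precisely when all $\lambda_r\neq0$, and $\deg(R(f))<eM$ as soon as one of them vanishes. As $\deg(f)\cdot e=Me$ and $f$ is guilty exactly when this strict drop occurs (Definition \ref{def:guilty}), this is the claimed equivalence (1)$\Leftrightarrow$(2).

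For (2)$\Leftrightarrow$(3), I would use that the $e$ roots of $\Delta^e-a_k$ are exactly $\gamma^r a_k^{1/e}$, $r=0,\ldots,e-1$, and that (with the same fixed choice of root) $f_l(\gamma^r a_k^{1/e})=\sum_{i\in J}c_{i\partial c_i}\gamma^{ri}a_k^{i/e}=\lambda_r$. Since $\Delta^e-a_k$ is monic, the product formula for the resultant gives, up to sign,
\[
\res_\Delta(f_l,\Delta^e-a_k)=\prod_{r=0}^{e-1}f_l(\gamma^r a_k^{1/e})=\prod_{r=0}^{e-1}\lambda_r.
\]
This product is nonzero iff every $\lambda_r$ is nonzero, which is precisely condition (2); hence (2)$\Leftrightarrow$(3).

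The step I expect to demand the most care is the no-cancellation argument inside (1)$\Leftrightarrow$(2): one must guarantee that the top weighted-degree parts of the $e$ factors cannot cancel one another upon multiplication. Passing to Puiseux series at infinity settles this cleanly, because there the leading term of a product is literally the product of the leading terms and cannot vanish when each factor is nonzero. Trying to argue the same through a naive associated graded ring is delicate, since the leading relation $\Delta^e-a_k t^k$ need not be irreducible when $\gcd(e,k)>1$, so that ring may fail to be a domain; the Puiseux-series viewpoint avoids this obstacle entirely.
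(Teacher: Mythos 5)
Your proof is correct, and it ultimately establishes the same key identity as the paper --- that the coefficient of $t^{Me}$ in $R(f)$ equals $\prod_{r=0}^{e-1}\lambda_r=\pm\res_\Delta(f_l(\Delta),\Delta^e-a_k)$ --- but by a noticeably different route. The paper handles $(2)\Leftrightarrow(3)$ exactly as you do, via the product formula for the resultant against the monic polynomial $\Delta^e-a_k$; for the rest it argues $(1)\Leftrightarrow(3)$ directly: it notes that $\prod_r f(t,\gamma^r\Delta)$ depends only on $t$ and $\Delta^e$, identifies its weighted-homogeneous component of top degree $Me$ as the product $A(t,\Delta^e)$ of the $M$-th homogeneous components of the conjugates, and observes that after the substitution $\Delta^e=g(t)$ only this component can contribute to $t^{Me}$, with coefficient $B(a_k)=L$. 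Your argument instead decomposes as $(1)\Leftrightarrow(2)$ and $(2)\Leftrightarrow(3)$, and proves the first equivalence by expanding each conjugate factor as a Puiseux series at infinity and invoking the degree valuation on that field. This buys a clean resolution of exactly the point you flag: the product of nonzero leading terms cannot cancel because Puiseux series form a field, whereas the paper's phrasing (``the highest degree homogeneous component \ldots is the product of the conjugates of the $M$-th homogeneous component'') silently allows that product $A$ to be zero and relies on the reader to see that lower weighted-degree components cannot contribute to $t^{Me}$ after substitution. Both arguments are of the same depth; the paper's stays entirely inside polynomial rings and graded components, while yours imports the standard machinery of Puiseux expansions, which also makes the degenerate case $\lambda_r=0$ (and even an identically vanishing factor) completely transparent.
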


\begin{proof}
Let $L=\displaystyle{\prod_{r=0}^{e-1}\sum_{i\in J}\,c_{i\partial
c_i}\gamma^{ri}a_k^{i/e}}=\res_{\Delta}(f_{l}(\Delta),\Delta^e-a_k))$. The right equality proves $(2)\Leftrightarrow(3)$.

We observe that, as seen in the proof of Lemma \ref{lema:unavariablemenos}, $R(f)$ is the substitution $\Delta^e=g(t)$ in the product of all $f(t,\gamma^r\Delta)$, which depends on $t$ and $\Delta^e$. The highest degree homogeneous component (with our definition of degree) of such polynomial is the product of the conjugates of the $M-th$ homogeneous component of $f(t,\Delta)$:
\[
A(t,\Delta^e)=\prod_{r=0}^{e-1}\sum_{i\in J} c_{i \partial c_i}t^{\partial c_i}\gamma^{ri}\Delta^i.
\]
On the other side, and reasoning likewise, $\res_{\Delta}(f_{l}(\Delta),\Delta^e-a_k))$ is the substitution $\Delta^e=a_k$ in the product of all $f_l(\gamma^r\Delta)$:
\[
B(\Delta^e)=\prod_{r=0}^{e-1}\sum_{i\in J}\,c_{i\partial c_i}\gamma^{ri}\Delta^i.
\]

Since $B(a_k)=L$,
\[
R(f)=\res_\Delta(f(t,\Delta),\Delta^e-g(t))=Lt^{ M\cdot e}+\mbox{lower degree terms},
\]
which means that $(1)$ is equivalent to $(3)$.
\end{proof}

\begin{remark}\label{rm:caracterizacion_de_culpabilidad}
Lemma \ref{lma:culpa_facil} should be easily extendable to the case with not nested radicals and, with some more work, maybe, to the general case. However the notation needed for the definition of $f_l(\ODelta)$, mainly in the general case, would be cumbersome.
\end{remark}

The following examples show that the hypotheses in Theorem \ref{th:main} are sufficient but not necessary.

\begin{example}
On the condition of guiltiness, consider the radical parametrization $x=y=t(\sqrt{t}-\sqrt{t+1})$. It can be checked that it is surjective, although it is defined by $p_1=t(\Delta_1-\Delta_2)$ with $\Delta_1^2=t$, $\Delta_2^2=t+1$ which is guilty. More in detail, the polynomial that relates $t$ and $x$ (what we called $G_1$ in the proof of Theorem \ref{th:main}) is
\[
t^4-4x^2t^3-2x^2t^2+x^4,
\]
which is monic in $t$ of degree 4, while from the theorem the expected degree would be $\frac32\cdot4=6$; the degree drop is due to the reduction occurring by multiplication of the four conjugates. However, since it is a monic polynomial, we can apply Theorem \ref{th:extension} for any value of $x$.
\end{example}

\begin{example}
On the degree condition, the Bernoulli lemniscate $(x^2+y^2)^2=x^2-y^2$ can be parametrized by 
\[
 x = \frac{t+t^3}{1+t^4}, \quad y = \frac{t-t^3}{1+t^4}
\]
which is surjective although the degree condition is not fulfilled. See \cite{Sendra} for more details.
\end{example}

\begin{example}
Consider the parametrization $\left(\frac{t(\sqrt{t}-1)}{t-1},\ \frac{t^2+1}{t-1}\right)$. The pair $t=1,\Delta_1=1$ is a zero of the ideal $I_1(\mP)=\langle\Delta_1^2-t,t(\Delta_1-1),t-1\rangle$, so hypothesis 2 is not satisfied. However, this is not a problem with respect to surjectivity. Indeed, since hypothesis 1 is satisfied, every point of $\overline{X(\mP)}$ can be lifted to $t,\ODelta$; after this, even though we do not have hypothesis 2 to guarantee the lifting to $z$ for $t=1$, there is no affine point in that situation because the second component becomes infinity for that parameter value.
\end{example}

\begin{remark}
The solutions of the ideal $I_i(\mP)$ are instances of indeterminations of the type $0/0$ for the $i$-th component. Since the subideal generated by $\oE$ has dimension 1 in $\C[t,\ODelta]$, three things can happen to the solution set of $I_i(\mP)$:
\begin{itemize}
\item it is empty, which corresponds by Hilbert's Nullstellensatz to the part of hypothesis 2 regarding the $i$-th component;
\item it is finite;
\item it is unidimensional. This indicates that both $p_i$ and $q_i$ become identically zero for some component of the unidimensional $V(\oE)$. Since $\oE$ is a Gr\"obner basis by itself and both $p_i$ and $q_i$ are assumed to be in normal form with respect to $\oE$, this cannot happen when $V(\oE)$ is irreducible (i.e. when the subideal generated by $\oE$ is primary).
\end{itemize}
\end{remark}

The rest of the section discusses more restrictive conditions than guiltiness that are, on the other hand, easier to check. That is, we can substitute hypothesis (2) of Theorem \ref{th:main} with something else that still provides surjectivity.


\begin{proposition}
With the previous notations, if $f\in\C[t,\ODelta]$ and $(t_0,\ODelta_0)\in\C^{m+1}$ satisfy $f(t_0,\ODelta_0)=0$ and $E_i(t_0,\ODelta_0)=0$ for every $i$, then $R(f)(t_0)=0$.
\end{proposition}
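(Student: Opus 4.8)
The plan is to prove the vanishing by descending induction along the chain of auxiliary polynomials $f_m,f_{m-1},\ldots,f_0=R(f)$ produced in Definition \ref{def:Rf}. Write $\ODelta_0=(\delta_1^0,\ldots,\delta_m^0)$ for the coordinates of the given point. By Lemma \ref{lema:unavariablemenos} each $f_k$ lies in $\C[t,\Delta_1,\ldots,\Delta_k]$, so it makes sense to evaluate it at the truncated point $(t_0,\delta_1^0,\ldots,\delta_k^0)$. The claim I would establish is:
\[
f_k(t_0,\delta_1^0,\ldots,\delta_k^0)=0 \quad\text{for every } k=m,m-1,\ldots,0.
\]
The base case $k=m$ is immediate, since $f_m=f$ and $f(t_0,\ODelta_0)=0$ by hypothesis. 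Applying the claim at $k=0$ gives $R(f)(t_0)=f_0(t_0)=0$, which is the desired conclusion.

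For the inductive step, assume $f_k(t_0,\delta_1^0,\ldots,\delta_k^0)=0$. First I would observe that the index $j=e_k$ in \eqref{eq:Fk} contributes the untwisted factor, because $\gamma_k^{e_k}=\exp(2\pi\mathrm{i})=1$ and hence $f_k(t,\Delta_1,\ldots,\Delta_k\cdot\gamma_k^{e_k})=f_k(t,\Delta_1,\ldots,\Delta_k)$. Thus $f_k$ itself divides $F_k$, and evaluating at $(t_0,\delta_1^0,\ldots,\delta_k^0)$ forces
\[
F_k(t_0,\delta_1^0,\ldots,\delta_k^0)=0.
\]
Next I would use that $E_k=\Delta_k^{e_k}-g_k$ is monic in $\Delta_k$, so the remainder in \eqref{eq:fk} comes from an honest polynomial identity in $\C[t,\Delta_1,\ldots,\Delta_{k-1}][\Delta_k]$, namely $F_k=Q\cdot E_k+f_{k-1}$ for some quotient $Q$, with $f_{k-1}$ independent of $\Delta_k$. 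Evaluating this identity at $(t_0,\delta_1^0,\ldots,\delta_k^0)$ and using the hypothesis $E_k(t_0,\delta_1^0,\ldots,\delta_k^0)=0$, the term $Q\cdot E_k$ dies and we are left with
\[
0=F_k(t_0,\delta_1^0,\ldots,\delta_k^0)=f_{k-1}(t_0,\delta_1^0,\ldots,\delta_{k-1}^0),
\]
which is exactly the claim at level $k-1$. This completes the induction.

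The argument is essentially a bookkeeping of evaluations, so there is no serious obstacle; the only point requiring care is the two-fold use of the structure of $E_k$. One must note that $E_k$ is monic in $\Delta_k$ to legitimize the division relation $F_k=Q\cdot E_k+f_{k-1}$ as a polynomial identity (so that specialization is valid), and one must identify the untwisted factor $j=e_k$ in the product $F_k$ so that the inductive hypothesis propagates. Both are formal checks. I would also remark that, by item (2) of Remark \ref{rem:R(f)}, replacing the remainder by the normal form changes nothing in this reasoning, since the relation $F_k\equiv f_{k-1}\pmod{E_k}$ is all that is used.
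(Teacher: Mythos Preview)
Your proof is correct and follows exactly the same approach as the paper's own proof: descending induction along the chain $f_m,\ldots,f_0$, using that $f_k$ divides $F_k$ (via the untwisted factor) and that $f_{k-1}\equiv F_k\pmod{E_k}$ with $E_k$ vanishing at the point. The paper's version is simply terser, stating the two key observations without spelling out the division identity or the role of monicity.
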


\begin{proof}
By hypothesis $f_m=f$ vanishes at the point. Since each $F_k$ is a multiple of $f_k$ and each $f_{k-1}$ is the remainder of $F_k$ by $E_k$, it follows that all these polynomials vanish as well. But $R(f)=f_0$.
\end{proof}

As a consequence, we have the following condition, which is computationally more convenient, for instance with resultants.
\begin{corollary}
If $\gcd(R(q_i),R(p_i))=1$ for all $i=1,\ldots, n$, then hypothesis 2 of Theorem \ref{th:main} holds.
\end{corollary}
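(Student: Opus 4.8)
The plan is to prove the contrapositive by combining the preceding Proposition with Hilbert's Nullstellensatz. I want to show that $\gcd(R(q_i),R(p_i))=1$ forces the ideal $I_i(\mP)=\langle\oE,p_i,q_i\rangle$ to be the whole ring $\C[t,\ODelta]$. By the Nullstellensatz it suffices to show that the variety $V(I_i(\mP))$ is empty, i.e. that there is no point $(t_0,\ODelta_0)\in\C^{m+1}$ simultaneously satisfying $E_i(t_0,\ODelta_0)=0$ for all $i$, together with $p_i(t_0,\ODelta_0)=0$ and $q_i(t_0,\ODelta_0)=0$.

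First I would fix an index $i$ and suppose, for contradiction, that such a common zero $(t_0,\ODelta_0)$ exists. I would apply the preceding Proposition twice: once to $f=p_i$ and once to $f=q_i$. Since $(t_0,\ODelta_0)$ satisfies the defining equations $\oE$ and annihilates $p_i$, the Proposition gives $R(p_i)(t_0)=0$; the identical argument applied to $q_i$ gives $R(q_i)(t_0)=0$. Thus $t_0$ is a common root in $\C$ of the two univariate polynomials $R(p_i)$ and $R(q_i)$.

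The conclusion is then immediate: a common root of $R(p_i)$ and $R(q_i)$ contradicts the hypothesis $\gcd(R(q_i),R(p_i))=1$, since over the algebraically closed field $\C$ a nontrivial gcd is equivalent to the existence of a shared root. Hence no common zero $(t_0,\ODelta_0)$ can exist, so $V(I_i(\mP))=\emptyset$, and by the Nullstellensatz $1\in I_i(\mP)$, i.e. $I_i(\mP)=\C[t,\ODelta]$. Running this for every $i=1,\ldots,n$ establishes hypothesis 2 of Theorem \ref{th:main}.

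I do not expect any serious obstacle here, as the argument is a short chain of implications resting on machinery already in place. The only point requiring a moment of care is the logical direction: the Proposition yields the necessary condition ``common zero implies common root'', and the gcd hypothesis rules out the latter, so the contrapositive cleanly delivers emptiness of the variety. One should be mildly attentive to the convention that $R$ on a polynomial already in $\C[t]$ acts as the identity (as recorded right after Definition \ref{def:Rf}), but this does not affect the argument, and no case analysis beyond the single contradiction step is needed.
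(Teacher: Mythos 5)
Your argument is correct and is exactly the route the paper intends: the corollary is stated as an immediate consequence of the preceding Proposition, and your chain (common zero of $\oE,p_i,q_i$ $\Rightarrow$ common root of $R(p_i)$ and $R(q_i)$ $\Rightarrow$ contradiction with $\gcd=1$, then the Nullstellensatz to get $1\in I_i(\mP)$) is precisely that consequence spelled out. No issues.
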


Next we offer another approach. Since checking guiltiness involves considering potentially too many conjugates and reducing, the following definition is convenient in a computational sense.

\begin{definition}\label{def-sup}
We say, again recursively, that a polynomial $f\in\C[t,\ODelta]$ is \textbf{suspicious} when either of these occurs:
\begin{enumerate}
\item there are at least two terms of highest degree; or
\item for some $i$, $\Delta_i$ appears in the (only) leading term and $g_i(t,\Delta_1,...,\Delta_{i-1})$ is suspicious.
\end{enumerate}
Note that, when $g_i$ is suspicious, the polynomial $\Delta_i$ is also suspicious.
\end{definition}

\begin{example}\label{ej:sospechoso}
Consider the expression 
\[
\sqrt{t^2-1}\sqrt{t-\sqrt{t^2-1}}+3.
\]
Naming $\delta_1=\sqrt{t^2-1}$ and $\delta_2=\sqrt{t-\delta_1}$, we have that $f(t,\Delta_1,\Delta_2)=\Delta_1\Delta_2+3$, which has degree $3/2$. It is a suspicious polynomial, since $g_2(t,\Delta_1)=t-\Delta_1$ is clearly suspicious and $\Delta_2$ appears in the leading monomial.

However, $f(t,\Delta_1,\Delta_2)+t^2$ is not suspicious because its degree is 2 and the highest degree homogeneous component only has the term $t^2$.
%
\end{example}

\begin{remark}
If the radicals are not nested, then the $g_i$ cannot be suspicious. In particular this happens if the radical parametrization is defined with only one root.
\end{remark}

\begin{theorem}\label{thm-sus-guilty}
If a polynomial is not suspicious then it is not guilty.
\end{theorem}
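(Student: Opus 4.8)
The plan is to prove the statement directly: assuming $f$ is not suspicious, I will show that no degree is lost at any stage of the recursion defining $R(f)$ in Definition \ref{def:Rf}, so that $\deg(R(f))=\deg(f)\cdot e_1\cdots e_m$ and hence, by Definition \ref{def:guilty}, $f$ is not guilty. Writing $\widehat{h}$ for the leading form of a polynomial $h$ (its highest-degree homogeneous component with respect to the weighted grading), the engine of the argument is a descending induction from $k=m$ to $k=0$ on the invariant $P(k)$: \emph{the polynomial $f_k$ has a single leading term (i.e.\ $\widehat{f_k}$ is a monomial), and for every $i\le k$ such that $\Delta_i$ occurs in $\widehat{f_k}$ the polynomial $g_i$ is not suspicious.} Since $f$ is not suspicious, the negations of conditions (1) and (2) of Definition \ref{def-sup} give exactly $P(m)$: negating (1) yields the unique leading term, and negating (2) yields the condition on the $g_i$.

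For the inductive step, suppose $P(k)$ holds with $\widehat{f_k}=\lambda\, t^{a}\Delta_1^{b_1}\cdots\Delta_k^{b_k}$. First I would record that each conjugate $f_k(t,\ldots,\gamma_k^{j}\Delta_k)$ has the same weighted degree as $f_k$ and leading form $\widehat{f_k}(t,\ldots,\gamma_k^{j}\Delta_k)$; since a product of nonzero monomials is a nonzero monomial, the leading form of the product is the product of the leading forms, giving $\widehat{F_k}=\mu\,\lambda^{e_k}t^{ae_k}\Delta_1^{b_1e_k}\cdots\Delta_{k-1}^{b_{k-1}e_k}(\Delta_k^{e_k})^{b_k}$ for a nonzero constant $\mu$. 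Thus $\widehat{F_k}$ is a single monomial in $t,\Delta_1,\ldots,\Delta_{k-1},\Delta_k^{e_k}$ of degree $e_k\deg(f_k)$. Passing to $f_{k-1}=\mathrm{rem}(F_k,E_k)$ is the substitution $\Delta_k^{e_k}\mapsto g_k$, which preserves weighted degree monomial-by-monomial; hence the top-degree part of $f_{k-1}$ is obtained by substituting only $\widehat{g_k}$, namely $\widehat{F_k}\big|_{\Delta_k^{e_k}=\widehat{g_k}}$. When $b_k=0$ this is just $\widehat{F_k}$; when $b_k>0$, the negation of condition (2) forces $g_k$ to be not suspicious, so $\widehat{g_k}$ is itself a single monomial and raising it to the power $b_k$ keeps a single nonzero monomial. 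Either way $f_{k-1}$ has a nonzero monomial as top-degree part, so $\deg(f_{k-1})=e_k\deg(f_k)$ and $\widehat{f_{k-1}}$ is again a single term.

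The delicate point, and the reason the recursive clause (2) of Definition \ref{def-sup} is precisely the right hypothesis, is the second half of $P(k-1)$: that every $\Delta_i$ occurring in $\widehat{f_{k-1}}$ has $g_i$ not suspicious. Reading off exponents, $\Delta_i$ (for $i<k$) occurs in $\widehat{f_{k-1}}$ exactly when $b_i>0$ or when ($b_k>0$ and $\Delta_i$ occurs in $\widehat{g_k}$). In the first case $g_i$ is not suspicious by $P(k)$. In the second case $b_k>0$ has already forced $g_k$ to be not suspicious, and applying the negation of condition (2) \emph{to $g_k$ itself} tells us that every $g_i$ with $\Delta_i$ in $\widehat{g_k}$ is not suspicious. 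This closes the induction, and feeding it down to $k=0$ gives $\deg(R(f))=\deg(f_0)=\deg(f)\cdot e_1\cdots e_m$, so $f$ is not guilty.

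I expect the only genuine obstacle to be careful bookkeeping of two facts that are clean here precisely because every leading form in sight is a monomial: that the leading form of the product $F_k$ equals the product of the conjugate leading forms (no cancellation, since monomials cannot cancel in top degree), and that the reduction $\Delta_k^{e_k}\mapsto g_k$ can drop the degree only through the \emph{lower-order} terms of $g_k$, a loss which the non-suspicion of $g_k$ rules out. When $m=1$ this reduces exactly to the resultant computation carried out in the proof of Lemma \ref{lma:culpa_facil}, where $\widehat{g}$ is simply the leading monomial $a_k t^k$ and the single-monomial case of the substitution $\Delta^e\mapsto a_k$ reproduces the nonvanishing condition there.
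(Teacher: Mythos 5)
Your proof is correct and follows essentially the same route as the paper's: a descending induction showing that each $f_k$ remains non-suspicious (a single leading monomial all of whose $\Delta_i$ have non-suspicious $g_i$) and that each reduction step multiplies the weighted degree by $e_k$. The only cosmetic difference is that you split cases on whether $\Delta_k$ appears in the leading term ($b_k=0$ versus $b_k>0$) rather than on whether $g_k$ is suspicious, which is the same dichotomy under the induction hypothesis.
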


\begin{proof}
Let $f$ be not suspicious. With the notations of Definition \ref{def:Rf}, we will prove the following by descending induction: every $f_k$ is not suspicious and $\deg(f_{k-1})=e_k\cdot\deg(f_k)$. The case $k=m$ is trivial. Now we suppose it true for $k$ and prove it for $k-1$.

For any $g\in\C[t,\ODelta]$ we will denote as $C(g)$ its homogeneous component of highest degree. Then $C(f_k)=at^\alpha\Delta_1^{\beta_1}\cdots\Delta_k^{\beta_k}$ where $a\in\C$, $\alpha\geq0$ and for every $i\in\{1,\ldots,k\}$, if $g_i$ is suspicious then $\beta_i=0$. 

We consider two cases:
\begin{itemize}
\item If $g_k$ is suspicious, $\beta_k=0$, so $C(F_k)=C(f_k)^{e_k}$. When passing to $f_{k-1}$, the substitution of $\Delta_k^{e_k}=g_k$ does not affect $C(F_k)$ and cannot increase the degree of lower terms because $\deg(\Delta_k^{e_k})=\deg(g_k)$, so $C(f_{k-1})=C(F_k)$.

\item If $g_k$ is not suspicious,
\[
 C(F_k)=a^{e_k}t^{\alpha{e_k}}\Delta_1^{\beta_1e_k}\cdots\Delta_{k-1}^{\beta_{k-1}e_k} \left( \prod_{i=1}^{e_k}\gamma_k^i\right)^{\beta_k} \Delta_k^{\beta_ke_k}.
\]
Using again that the substitution of $\Delta_k^{e_k}=g_k$ does not increase degrees,
\[
 C(f_{k-1})=a^{e_k}t^{\alpha{e_k}}\Delta_1^{\beta_1e_k}\cdots\Delta_{k-1}^{\beta_{k-1}e_k} \left( \prod_{i=1}^{e_k}\gamma_k^i\right)^{\beta_k} C(g_k)^{\beta_k}.
\]
But since $g_k$ is not suspicious, $C(g_k)$ only has one monomial not involving suspicious variables. Therefore $f_{k-1}$ is not suspicious. On the other hand,
\[
\deg(f_{k-1})=\alpha{e_k}+\deg(\Delta_1)\beta_1e_k+\cdots\deg(\Delta_{k-1})\beta_{k-1}e_k+\deg(g_k)\beta_k
\]
and since $\deg(g_k)=e_k\deg(\Delta_k)$,
\[
\deg(f_{k-1}) = e_k\left( \alpha+\deg(\Delta_1)\beta_1+\cdots\deg(\Delta_k)\beta_k\right) = e_k\deg(f_k).
\]
\end{itemize}

We conclude $\deg(f_0)=\deg(R(f))=\deg(f)\cdot e_1\cdots e_m$.
\end{proof}

\begin{corollary}\label{cor-susp}
Theorem \ref{th:main} is also true if ``not guilty'' is replaced by ``not suspicious''.
\end{corollary}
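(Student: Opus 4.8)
The plan is to obtain the corollary immediately by combining Theorem~\ref{thm-sus-guilty} with Theorem~\ref{th:main}, exploiting that ``not suspicious'' is a logically stronger (though more conveniently checkable) condition than ``not guilty''. In other words, the replacement only tightens hypothesis~(1), so there is nothing to redo in the main surjectivity argument; one merely needs to funnel the new hypothesis into the old one.

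First I would spell out the modified statement precisely: after the replacement, hypothesis~(1) of Theorem~\ref{th:main} reads that there exists an index $i$ with $p_i$ not suspicious and $\deg p_i > \deg q_i$, while hypothesis~(2) is left untouched; the desired conclusion is still the surjectivity of $\mP$. The single key step is then to invoke Theorem~\ref{thm-sus-guilty}: since $p_i$ is not suspicious, it is not guilty. Consequently the original hypothesis~(1) of Theorem~\ref{th:main} holds verbatim for this same $i$, hypothesis~(2) holds by assumption, and a direct application of Theorem~\ref{th:main} yields that $\mP$ is surjective.

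I do not expect any genuine difficulty at this stage. All of the substantive work --- the bookkeeping of weighted degrees through the successive reductions from $F_k$ to $f_{k-1}$, which shows that no degree drop occurs for a non-suspicious polynomial, so that $\deg(R(f)) = \deg(f)\cdot e_1\cdots e_m$ --- has already been carried out in the proof of Theorem~\ref{thm-sus-guilty}. The corollary is thus essentially a one-line implication, whose only conceptual content is the observation that strengthening hypothesis~(1) in this way cannot invalidate the conclusion of the main theorem.
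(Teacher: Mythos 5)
Your proposal is correct and matches the paper's (implicit) argument exactly: the corollary is an immediate consequence of Theorem~\ref{thm-sus-guilty}, since ``not suspicious'' implies ``not guilty'' and therefore the strengthened hypothesis~(1) reduces to the original one, after which Theorem~\ref{th:main} applies verbatim.
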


\section{On missing points}\label{sec-miss}


Recall that we call missing points those $\overline{x}$ in the finite set $\overline{X(\mP)}\setminus X(\mP)$ (see Definition \ref{def:sup}), i.e. points for which there do not exist $(t,\ODelta)\in\C^{m+1}$ such that $E_i(t,\ODelta)=0$ for all $i$ and $\mP(t,\ODelta)$ is well defined and equal to {$\overline{x}$}.

To find missing points, let us apply projective elimination techniques as in \cite[Section 8.5]{CoxLittleOshea2015a}:
\begin{enumerate}
\item Consider the ideal $I\subset\C[t,\ODelta,{\overline{x}},z]$ generated by the equations defining $\DP$ (see Section \ref{sec:previos}).
\item Let $I^h\subset\C[{\overline{x}}][w,t,\ODelta,z]$ be the ideal generated by the $(w,t,\ODelta,z)$-ho\-mo\-ge\-nization of the elements of $I$ (a method based on Gr\"obner bases to compute $I^h$ can be found in \cite[Section 8.5, Proposition 10]{CoxLittleOshea2015a}). 
\item Note that $\oDP=V(I^h)$, where $\oDP$ is the projective closure of $\DP$ in $\P^{m+2}\times{\C^n}$ (see \cite[Section 8.5, Proposition 8]{CoxLittleOshea2015a}). Then, by \cite[Section 8.5, Corollary 9]{CoxLittleOshea2015a}, the projection of $\oDP\subset\P^{m+2}\times{\C^n\to\C^n}$ is exactly $\overline{X(\mP)}$, so it is Zariski closed in ${\C^n}$.
\item Therefore, the missing points are contained in the projection of the points of $\oDP$ at infinity; that is, the projection of $\oDP\cap V(w)=\oDP\setminus \DP$ onto the {$\overline{x}$} coordinates is a finite superset of the set of missing points.
\end{enumerate}

This is also a way to compute missing points for rational curves. However, it is simpler to compute the limit of the parametrization at infinity, see \cite{Sendra}. We want to replicate the latter for radical parametrizations. It is worth observing that some missing points are related to condition 1 of Theorem \ref{th:main} (the one which allows us to lift to $\AP$, see beginning of Section 3) and the others are related to condition 2 (the one which allows the point in $\AP$ to be lifted to $\DP$). 

\begin{remark}\label{rem:cota1}
The points that may not be lifted from $\overline{X(\mP)}$ to $\AP$ are those for which the leading coefficients $c_{\alpha_i}(x_i)$ of $F_i(x_i,t) = c_{\alpha_i}(x_i)t^{\alpha_i}+\cdots$ (the generator of the ideal $I(\AP)\cap\C[x_i,t]$) 
vanish simultaneously. 

There are at most $\deg(c_{\alpha_1})\cdots \deg(c_{\alpha_n})$ candidates $\overline{x}$ to be missing points. Moreover, $V(c_{\alpha_1}(x_1),\ldots,c_{\alpha_n}(x_n))\cap\overline{X(\mP)}$ is a superset of missing points due to condition 1 of Theorem \ref{th:main}.
\end{remark} 

\begin{remark}\label{rem:cota1bis}
Consider the ``Castle'' (union of all tower varieties of \cite{SendraSevillaVillarino2017}) $V(\oE)$ in the $m+1$-dimensional space of coordinates $(t,\ODelta)$. The rational map $(t,\ODelta)\mapsto \mP(t,\ODelta)$ defined in $V(\oE)$ will be called $\mP$, abusing the notation. $V(\oE)$ is of pure dimension one (\cite{SendraSevillaVillarino2017}) so, since the fundamental locus of a rational map has codimension greater than or equal to 2 in smooth algebraic varieties, $\mP$ can be extended to the whole desingularization $\mC$ of the projective closure of $V(\oE)$.

The points that are not lifted from $\overline{X(\mP)}$ to $\AP$ are the images by $\mP$ of the points of $\overline{X(\mP)}$ that are ``images'' by $\mP$ of points at infinity of $V(\oE)$. The degree of $V(\oE)$ is, at most, $\varepsilon_1\cdots \varepsilon_m$, where $\varepsilon_i=\max\{e_i,\deg(g_i)\}$ with the classical definition of degree, since it is an affine complete intersection of hypersurfaces of degrees $\varepsilon_1,\ldots,\varepsilon_m$. Therefore, the points at infinity of $V(\oE)$ are, counted with multiplicity, $\varepsilon_1\cdots \varepsilon_m$. This proves that a bound for the amount of missing points due to the lack of hypothesis 1 is $\varepsilon_1\cdots \varepsilon_m$.
\end{remark} 

\begin{example}\label{ex:cotas}
Consider $\mP=\left(\frac{\sqrt[n]{t(t-1)^{n-1}}}{t-1},\frac{\sqrt[m]{(2t-1)(t-1)^{m-1}}}{t-1}\right)=\left(\frac{\Delta_1}{t-1},\frac{\Delta_2}{t-1}\right)$, with $E_1=\Delta_1^n-t(t-1)^{n-1}$, $E_2=\Delta_2^m-(2t-1)(t-1)^{m-1}$. Note that $\mP$ can be simplified to $\left(\sqrt[n]{\frac{t}{t-1}},\sqrt[m]{\frac{2t-1}{t-1}}\right)$, and this imposes a different way to choose $\ODelta$ and $\oE$.

It is easy to see that $\overline{X(\mP)}$ is defined by the equation $x^n-y^m+1=0$. The polynomials $F_1$ and $F_2$ of Remark \ref{rem:cota1} are
\[ \begin{array}{l} F_1(x,t)=x^n(t-1)-t=(x^n-1)t-x^n,\\
 F_2(y,t)=y^m(t-1)-2t+1=(y^m-2)t-y^m+1. \end{array}\]
Then {$\deg(c_{\alpha_1})=n$ and $\deg(c_{\alpha_2})=m$}. The possible missing points are those whose first coordinate is an $n$-th root of $1$ and second coordinate is an $m$-th root of $2$. One can check that, indeed, all those points cannot be reached for any value of $t$, so we have $mn$ missing points in total.  Therefore, the bounds in Remarks \ref{rem:cota1} and \ref{rem:cota1bis} are sharp. 

Note that $F_1(x,1)=-1$, hence no point in $\AP$ has coordinate $t=1$, so there are no additional missing points.
\end{example} 

\begin{remark}
In Example \ref{ex:cotas}, it is not difficult to check that the parametrization is proper (i.e. generically injective). In general, when $X(\mP)$ is irreducible, one could deduce properness if the greatest common divisor of $F_1$ and $F_2$, seen as polynomials with coefficients in the field of rational functions of $X(\mP)$, is the power of a degree one polynomial. 
\end{remark}

\begin{remark}\label{rem:cota2}
The points that cannot be lifted from $\AP$ to $\DP$ are among those whose coordinates $(t,\ODelta)$ satisfy the equations $E_1,\ldots,E_m,p_i,q_i$ for some $i=1,\ldots, n$. Taking their projections on the space of $\ox$ coordinates, this means that the number of missing points due to hypothesis 2 failure is less than or equal to the number of solutions of those algebraic systems of equations.
\end{remark}

\noindent \textbf{Acknowledgements.} The authors are partially supported by the grant PID2020-113192GB-I00/AEI/ 10.13039/501100011033 (Mathematical Visualization: Foundations, Algorithms and Applications) from the Spanish State Research Agency (Ministerio de Ciencia e Innovación). J. Caravantes and C. Villarino belong to the Research Group ASYNACS (Ref. CT-CE2019/683). D. Sevilla is a member of the research group GADAC and is partially supported by Junta de Extremadura and Fondo Europeo de Desarrollo Regional (GR21055).


\end{document}